\newtheoremstyle{etoile}{\parskip}{\parskip}{\itshape}
                        {0pt}{\bfseries\sffamily}{.}{ }{}
\theoremstyle{etoile}
\newcommand\bbox{\hfill\rule{2mm}{2mm}}
\newtheorem{prop}{Proposition}[section]
\newtheorem{theo}[prop]{Theorem}
\newtheorem{defin}[prop]{Definition}
\newtheorem{lem}[prop]{Lemma}
\newcommand\egaldef{\stackrel{\mbox{\upshape\tiny def}}{=}}
\newcommand\1{\leavevmode\hbox{\rm \small1\kern-0.35em\normalsize1}}
\newcommand\EE{\mathsf{E}}
\newcommand\Dc{\mathcal{D}} 
\newcommand\Gb{\mathbf{G}}
\newcommand\Ac{\mathcal{A}}
\newcommand\Cc{\mathcal{C}}
\newcommand\CT{\widetilde{\Cc[T]}}
\newcommand\Lc{\mathcal{L}}
\newcommand\Mc{\mathcal{M}}
\newcommand\Oc{\mathcal{O}}
\newcommand\Rc{\mathcal{R}}
\newcommand\Sb{\mathbf{S}}
\newcommand\Vc{\mathcal{V}}
\newcommand\ZZ{\mathbb{Z}}
\newcommand\n{^{\scriptscriptstyle (N)}}
\newcommand\nd{^{\scriptscriptstyle (2N)}}
\newcommand\nn{\scriptscriptstyle N}
\def\DD{\displaystyle} 
\DeclareMathOperator*{\lra}{\leftrightarrows}
\DeclareMathOperator*{\rla}{\rightleftarrows}
\begin{document}
\title{About Hydrodynamic Limit of Some Exclusion Processes via Functional Integration}

\author{Guy Fayolle \thanks{INRIA - Domaine de Voluceau, 
Rocquencourt
BP 105 - 78153 Le Chesnay Cedex - France. Contact:
\texttt{Guy.Fayolle@inria.fr}} 
\and Cyril Furtlehner \footnotemark[1]  \and \hspace{10cm} May 2011}

\maketitle
\begin{abstract}
This article considers some classes of models dealing with the dynamics of discrete curves subjected to stochastic deformations. It turns out that the problems of interest can be set in terms of interacting exclusion processes, the ultimate goal being to derive  hydrodynamic limits  after proper scalings.  A seemingly new method is proposed, which relies on the analysis of specific partial differential operators, involving variational calculus and functional integration: indeed, the variables are the values of some functions at given points, the number of which tends to become infinite, which requires the construction of \emph{generalized measures}. 
 Starting from a detailed analysis of the \textsc{asep} system on the torus $\ZZ/N\ZZ$, we claim that the arguments a priori work in higher dimensions (\textsc{abc}, multi-type exclusion processes, etc),
leading to sytems of coupled partial differential equations of Burgers' type.
\end{abstract}

\paragraph{Keywords} Cauchy problem, exclusion process, functional integration, hydrodynamic limit, martingale, weak solution.


\section{Preliminaries}\label{INTRO}
 Interplay between discrete and continuous description is a recurrent   question
in statistical physics, which  in some cases can be answered quite rigorously via probabilistic methods. In the context of reaction-diffusion 
systems, this is tantamount to studying  fluid or hydrodynamics limits.   
Number of approaches have been proposed, in particular in the framework of exclusion 
processes, see e.g. \cite{Li},\cite{MaPr} \cite{Sp}, \cite{KiLa} 
and references therein.  As far as fluid or hydrodynamic limits are at stake, most of these methods have in common to be limited to systems for which the stationary states are  
given in closed product forms, or at least for which the invariant measure for finite $N$ (the size of the system) is explicitly known. For instance,  \textsc{asep} with open boundary can be described in terms of matrix product form (a sort of a non-commutative product form) and the continuous  limits can be understood by means of brownian bridges (see \cite{DeEvHaPa}). We propose to address this question from the following different point of view: starting from discrete sample paths subjected to stochastic deformations, the ultimate goal is to understand the nature of the limit curves when $N$ increases to infinity. How do these curves evolve with time, and which limiting process do they represent as $t$ goes to infinity (equilibrium curves)? Following \cite{FaFu,FaFu2,FaFu3},  we plan to give some  partial answers  to these problems in a series of forthcoming papers.

The method proposed in the present study is applied in detail to the \textsc{asep} model. The mathematical kernel relies on the analysis of specific partial differential equations involving variational calculus. A usual sequence of empirical measures is shown to converge in probability to a deterministic measure, which is the unique weak solution of a Cauchy problem. Here variables are in fact the values of some function at given points, and their number becomes infinite.  

In our opinion, the approach presents some new features, and very likely extend to  higher dimensions, namely multi-type exclusion processes. A future concern will be to establish a complete  hierarchy of systems of hydrodynamic equations, the study of which should allow us to describe non-Gibbs states. All these questions form also the matter of ongoing works.

\section{Model definition}
\subsection{A general stochastic clock model}\label{sec:clock}
Consider an oriented sample path of a planar random walk in $\Rc^2$, consisting of $N$ \emph{steps} (or \emph{links}) of equal size. Each step can have $n$ discrete possible orientations, drawn from the set  of angles with some given origin $\{\theta_k=\frac{2k\pi}{n}, k=0,\ldots, n-1\}$. The stochastic dynamics in force consists in displacing one single point at a time without breaking the path, while keeping all  links within the set of  admissible orientations. In this operation, two links are simultaneously displaced, what constrains quite strongly the possible dynamical rules 
\subsubsection{Constructing a related continuous-time Markov chain} 
\begin{itemize}
\item Jumps are produced by independent exponential events. 
\item  Periodic boundary conditions will be assumed, this point being not a crucial restriction. 
\item Dynamical rules are given by a set of \emph{reactions} between consecutive links, an equivalent formulation being possible in terms of \emph{random grammar}.
\end{itemize}
With each link is associated a \emph{type}, i.e. a {\emph{letter of an alphabet}. Hence, for any $n$, we  can define
which later on  will be sometimes referred to as a \emph{local exchange} process.

For $i \in [1,N]$  and $k \in [1,n]$, let $X_i^k$ represent a link of  type $k$ at site $i$. 
Then we can define the following set of reactions.
\begin{equation} \label{REAC}
\begin{cases}
 \DD X_i^k X_{i+1}^l\ \rla_{\lambda^{lk}}^{\lambda^{kl}}\ X_i^l X_{i+1}^k,\qquad
k=1,\ldots,n, \quad l \ne k+\frac{n}{2}\,,\\[0.4cm] 
\DD X_i^k X_{i+1}^{k+n/2}\ \rla_{\delta^{k+1}}^{\gamma^k} 
\ X_i^{k+1} X_{i+1}^{k+n/2+1}, \qquad k=1,\ldots,n.
\end{cases} 
\end{equation}
The red equations does exist only for even $n$, because of the existence of  \emph{folds} [two consecutive links with opposite directions], which yield a richer dynamics.

$X_i^k$ can also be viewed as a binary random variable describing the occupation of site 
$i$ by a letter of type $k$. Hence, the state space} of the system is represented by the array   \[
\eta \egaldef \{ X_i^k, i = 1,\ldots, N; k = 1,\ldots, n\}.
\]

\subsection{Examples} \label{sec:exemple}
\emph{(1) The simple exclusion process}

The first elementary and most sudied example is the simple exclusion process: this model, after mapping particles onto links, corresponds to a one-dimensional fluctuating interface. Here we take a binary alphabet and, letting
$X^1=\tau$ and $X^2=\bar\tau$, the set of reactions simply rewrites
\[
\tau\bar\tau \lra_{\lambda^+}^{\lambda^-}\ \bar\tau\tau,
\]
where $\lambda^\pm$ are the transition rates for the jump of a particle to
the right or to the left. 

\emph{(2) The triangular lattice and the ABC model}

 Here the evolution of the random walk is  restricted to the triangular
 lattice. Each link (or step) of the walk is either $1$, $e^{2i\pi/3}$
 or $e^{4i\pi/3}$, and quite naturally will be said to be of type A, B or C, respectively. 
 This corresponds to the so-called \emph{ABC
 model}, since there is a coding by means of a  $3$-letter alphabet. 
 The set of \emph{transitions} (or reactions) is given by
\begin{eqnarray}
AB\ \lra_{p^+}^{p^-}\ BA, \qquad 
BC\ \lra_{q^+}^{q^-}\ CB, \qquad 
CA\ \lra_{r^+}^{r^-}\ AC, \qquad 
\end{eqnarray}
where there is a priori no symmetry, but we will impose \emph{periodic
boundary conditions} on the sample paths. This model was first
introduced in \cite{EvFoGoMu} in the context of particles with exclusion, and for some cases corresponding to the reversibility of the process, a Gibbs form for the invariant measure was given in \cite{EvKaKoMu}

\section{Hydrodynamics for the basic asymmetric exclusion process (ASEP)}
As mentioned above, we aim at obtaining hydrodynamic equations for a  class of exclusion  models. The method, although relying on classical powerful tools  (martingales, relative compactness of measures, functional analysis), has some new features which should hopefully prove fruitful in other contexts. The essence of the approach is in fact contained in the analysis of the popular \textsc{asep} model, presented below. We note the difficulty to find in the existing literature a complete study encompassing  various special cases (symmetry, total or weak asymmetry, etc). 

Consider $N$ sites labelled from $1$ to $N$, forming a discrete closed curve in the plane, so that  the numbering of sites is implicitly  taken modulo $N$, i.e. on the discrete torus $\Gb\n\egaldef \ZZ/N\ZZ$.  In higher dimension, say on the lattice $\ZZ^k$, the related  set of sites would be drawn on the torus  $(\ZZ/N\ZZ)^k$. 

We gather below some notational material valid throughout this paper.
\begin{itemize}
\item $\Rc$ (resp. $\Rc^+$) stands for the real (resp. positive real) line. $\Cc^k[0,1]$ is the collection of all real-valued, $k$-continuously differentiable functions defined on the interval $[0,1]$, and $\Mc$  is the space of all  finite positive measures on the torus $\Gb\egaldef [0,1)$. 
\item For $\Sb$ an arbitrary metric space, $\mathcal{P}(\Sb)$ is the set of probability measures on  $\Sb$, and $\Dc_\Sb[0,T]$ is the space of right continuous functions $z : [0,\infty]\to \Sb$ with left limits and $t\to z_{t}$. 
 \item $\Cc^\infty_{0}(K)$ is  the space of infinitely differentiable functions with compact support included in $K\in\Sb$, and we shall  write $\CT$ to denote the subset  of functions  $\phi(x,t)\in\Cc^\infty_{0}([0,1]\times[0,T_-])$ vanishing at $t=T$.
\item For $ i=1,\ldots,N$, let  $A_i\n(t)$ and $B_i\n(t)$ be binary random variables representing respectively a particule or a hole at site $i$, so that, owing to the exclusion constraint,  $A_i\n(t)+B_i\n(t)=1$, for all $1\le i\le N$. Thus $\bigl\{\mathbf{A}\n(t)\egaldef\bigl(A_i\n(t), \ldots,A_N\n(t)\bigr), t\ge0\bigr\}$ is a Markov  process.
\item  $\Omega\n$ will denote the generator of the Markov process $\mathbf{A}\n(t)$, and $\mathcal{F}_t\n=\sigma\bigl(\mathbf{A}\n(s),s\leq t\bigr)$ is the associated  natural filtration.
\item Our purpose is to  analyze the sequence of empirical random measures 
\begin{equation} \label{eq:emp1}
\mu\n_{t} = \frac{1}{N}\sum_{i\in\Gb\n} A_i\n(t) \delta _\frac{i}{\nn},
\end{equation}
when $N\to\infty$, after a convenient scaling of the parameters of the generator 
$\Omega\n$. The probability distribution associated with the path of the Markov process
$\mu\n_t, t\in[0,T]$, for some fixed $T$, will be simply denoted by $Q\n$.
 \end{itemize}
As usual, one can embed $\Gb\n$ in  $\Gb$, so that  a point  $i\in \Gb\n$ corresponds to the point  $i/N$ in $\Gb$. Hence, in view of (\ref{eq:emp1}), it is quite natural to let the sequence $Q\n$ be defined on a unique space  $\Dc_\Mc[0,T]$, which becomes a polish space (i.e. complete and separable) via the usual Skorokod topology, as soon as $\Mc$ is itself Polish (see e.g. \cite{EtKu}, chapter $3$). Without further comment,  $\Mc$ is assumed to be endowed with the vague product topology, as a consequence of the famous  Banach-Alaoglo and Tychonoff  theorems (see e.g. \cite{RUD,Ka}). 

Choose two arbitrary functions $\phi_a,\phi_b\in \CT$ and define the following real-valued positive measure
\begin{equation} \label{eq:emp2}
Z\n _t[\phi_a,\phi_b]  \egaldef \exp\biggl[\frac{1}{N}\sum_{i\in\Gb\n} \phi_a\Bigl(\frac{i}{N},t\Bigr)A_i\n(t)+\phi_b\Bigl(\frac{i}{N},t\Bigr)B_i\n(t) \biggr],
\end{equation} 
viewed as a  functional of $\phi_a,\phi_b$. Since $A_i\n(t)+B_i\n(t)=1$, for $1\leq i\leq N$, the transform $Z\n _t$ is essentially a functional of the sole function  
$\phi_a-\phi_b$, up to a constant uniformly bounded in $N$. Nevertheless, it will appear later that we need 2 independent functions. For the sake of brevity, the explicit dependence on 
$N,t$ or $\phi$, of quantities like $A_i\n(t), B_i\n(t), 
Z\n_t[\phi_a,\phi_b]$, will frequently be omitted, wherever the meaning remains clear from the context: for instance, we simply write $A_i, B_i$ or $Z\n_t$. Also $Z\n$ stands for the process $\{Z\n_t, \,t\ge 0\}$.

A standard powerful method to prove the convergence (in a sense to be specified later) of  the sequence of probability measures introduced in (\ref{eq:emp1}) consists  first in showing its relative compactness, and then in verifying  the coincidence of all possible limit points (see e.g. \cite{KiLa}. Moreover here, by the choice of the functions $\phi_a, \phi_b$, it suffices to prove these two properties for the sequence of projected measures defined on  $\Dc_\Rc[0,T]$ and corresponding to the processes $\{Z\n_t[\phi_a,\phi_b], t\ge 0\}$.

Let us now introduce  quantities which, as far as scaling is concerned, are crucial
in order to obtain meaningful hydrodynamic equations.
\begin{equation}\label{eq:scale1}
\begin{cases}
\DD\lambda(N) \egaldef \frac{\lambda^{ab}(N)+\lambda^{ba}(N)}{2} ,\\[0.2cm]
\mu(N)  \egaldef \lambda^{ab}(N)-\lambda^{ba}(N),
\end{cases}
\end{equation}
where the dependence of the rates on $N$ is explicitly mentioned.

\begin{theo} \label{theo:main} Let the system (\ref{eq:scale1})  have a given asymptotic expansion of the form, for large $N$,
\begin{equation}\label{eq:scale2}
\begin{cases}
\DD\lambda(N) \egaldef \lambda N^2 +o(N^2) ,\\[0.2cm]
\mu(N)  \egaldef \mu N + o(N),
\end{cases}
\end{equation}
 where  $\lambda$ and $\mu$ are fixed constants. [As for the scaling assumption (\ref{eq:scale2}),  the random measure $\log Z\n_t$ is a functional of the underlying Markov process, in which the time has been speeded up by a factor $N^2$ and the space shrunk by $N^{-1}$]. Assume also the sequence of initial empirical measures $\log Z\n _0$, taken at time $t=0$, converges in probability to some deterministic measure with a given density $\rho(x,0)$, so that, in probability,
  \begin{equation}\label{eq:init}
  \lim_{N\to\infty} \log Z\n _0 = \int_0^1[\rho(x,0)\phi_a(x,0)+(1-\rho(x,0))\phi_b(x,0)]dx ,
 \end{equation}
for any pair of  functions $\phi_a, \phi_b \in \CT$.

Then,  for every $t>0$, the sequence of random measures $\mu\n_t$ converges in probability, as  $N\to\infty$, to a deterministic measure having a density $\rho(x,t)$ with respect to the Lebesgue measure, which is the unique \emph{weak solution of  the   Cauchy problem}

\begin{equation}\label{eq:Cauchy}
\begin{split}
 \int_0^T \! \int_0^1 \left[\rho(x,t) \Bigl(\frac{\partial\phi(x,t)}{\partial t} +\lambda\frac{\partial^2\phi(x,t)}{\partial x^2}\Bigr) -\mu\rho(x,t)\bigl(1-\rho(x,t)\bigr)\frac{\partial\phi(x,t)}{\partial x}\right]\! dxdt 
 \\[0.2cm]
 + \int_0^1  \rho(x,0)\phi(x,0) dx  = 0, \hspace{1cm}
\end{split}
 \end{equation}
where  (\ref{eq:Cauchy}) holds for any function $\phi\in\CT$. 

 If, moreover, one assumes the existence of $\frac{\partial^2\rho(x,0)}{\partial x^2}$, for $\rho(x,0)$ given, then (\ref{eq:Cauchy}) reduces to a classical Burgers' equation
\[
\frac{\partial\rho(x,t)}{\partial t} = \lambda\frac{\partial^2\rho(x,t)}{\partial x^2} +
\mu [1-2\rho(x,t)] \frac{\partial\rho(x,t)}{\partial x}.
\]
\end{theo}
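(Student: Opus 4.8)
The plan is to run the standard martingale scheme, but carried out at the level of the exponential functional $Z\n_t[\phi_a,\phi_b]$ of~(\ref{eq:emp2}) rather than of the empirical measure directly. First I would write Dynkin's formula: for fixed $\phi_a,\phi_b\in\CT$,
\[
M\n_t \egaldef Z\n_t - Z\n_0 - \int_0^t\Bigl(\frac{\partial}{\partial s}+\Omega\n\Bigr)Z\n_s\,ds
\]
is an $\mathcal{F}_t\n$-martingale. The heart of the matter is the evaluation of the generator: an exchange at the bond $(i,i+1)$ turns $Z\n_s$ into $Z\n_s\,e^{\pm\Delta_i}$, so that
\[
\Omega\n Z\n_s = Z\n_s\sum_{i\in\Gb\n}\Bigl[\lambda^{ab}(N)\,A_iB_{i+1}\bigl(e^{\Delta_i}-1\bigr)+\lambda^{ba}(N)\,B_iA_{i+1}\bigl(e^{-\Delta_i}-1\bigr)\Bigr],
\]
where $\Delta_i=\frac1N\bigl(\psi(\tfrac{i+1}N,s)-\psi(\tfrac iN,s)\bigr)$ with $\psi\egaldef\phi_a-\phi_b$, hence $\Delta_i=\frac1{N^2}\partial_x\psi+\frac1{2N^3}\partial_{xx}\psi+O(N^{-4})$. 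Substituting $\lambda^{ab}(N)=\lambda(N)+\tfrac12\mu(N)$ and $\lambda^{ba}(N)=\lambda(N)-\tfrac12\mu(N)$, the expansions~(\ref{eq:scale2}), the second-order expansion of $e^{\pm\Delta_i}-1$, and performing two discrete summations by parts, one finds that $\Omega\n Z\n_s/Z\n_s$ equals, up to an $O(N^{-1})$ error and the replacement discussed below, a continuous functional of $\mu\n_s$ converging as $N\to\infty$ to
\[
\lambda\int_0^1\rho(x,s)\,\partial_{xx}\psi(x,s)\,dx-\mu\int_0^1\rho(x,s)\bigl(1-\rho(x,s)\bigr)\,\partial_x\psi(x,s)\,dx .
\]
Likewise the quadratic variation $\langle M\n\rangle_t$ is a sum of $N$ terms each of order $N^2\cdot N^{-4}$, hence $O(N^{-1})$, so $M\n_t\to0$ in $L^2$ uniformly on $[0,T]$ and the limiting dynamics is deterministic.

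The only step that is not pure bookkeeping is the closure of the equation: the asymmetric part of the drift produces the local term $\frac1N\sum_i\partial_x\psi(\tfrac iN,s)\,A_iA_{i+1}$, which must be replaced by $\int_0^1\partial_x\psi(x,s)\,\rho(x,s)^2\,dx$. I would do this via the usual replacement (one-block / two-blocks) estimate: $A_iA_{i+1}$ is first replaced by $\bigl(\bar A_i^{(\ell)}\bigr)^2$, where $\bar A_i^{(\ell)}$ is the empirical density on a mesoscopic block of size $\ell$ around $i$, exploiting local equilibrium of the symmetric part of the dynamics, and then $\bar A_i^{(\ell)}$ is identified with $\rho$ by letting $\ell\to\infty$ after $N\to\infty$. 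On the torus with the symmetric rates speeded up by $N^2$ this is classical; the delicate point is its adaptation to the weakly asymmetric scaling~(\ref{eq:scale2}), and this is where I expect the main work of the proof to lie.

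For relative compactness I would check that the projected laws on $\Dc_\Rc[0,T]$ of the real processes $\{Z\n_t[\phi_a,\phi_b],\,t\in[0,T]\}$ satisfy Aldous' criterion: $Z\n_t$ is uniformly bounded, its exponent lying in a fixed interval depending only on $\|\phi_a\|_\infty$ and $\|\phi_b\|_\infty$, and the uniform $O(1)$ bound on $\Omega\n Z\n$ together with the $O(N^{-1})$ bound on $\langle M\n\rangle$ control the oscillations over short time intervals. Since $\phi_a,\phi_b$ range over a family separating the points of $\Mc$, this yields relative compactness of $Q\n$ on $\Dc_\Mc[0,T]$. Passing to the limit in Dynkin's identity --- using the initial convergence hypothesis~(\ref{eq:init}) and the closure step --- shows that along any subsequence $Z\n_t$ converges to a deterministic $Z_t$ with $\log Z_t=\int_0^1[\rho(x,t)\phi_a(x,t)+(1-\rho(x,t))\phi_b(x,t)]\,dx$ and with $\frac{d}{dt}\log Z_t$ equal to the drift identified above; specializing to $\phi_b\equiv0$ and $\phi_a=\phi\in\CT$, which vanishes at $t=T$, and integrating over $[0,T]$ produces exactly the weak formulation~(\ref{eq:Cauchy}).

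It then remains to establish uniqueness for the Cauchy problem~(\ref{eq:Cauchy}): once the subsequential limit is a single deterministic trajectory, the whole sequence $Q\n$ converges weakly, hence $\mu\n_t$ converges in probability, to $\rho(\cdot,t)\,dx$. Uniqueness of a bounded weak solution $0\le\rho\le1$ (for $\lambda>0$) follows from a standard energy estimate: the difference $w=\rho_1-\rho_2$ of two solutions solves, weakly, $\partial_tw=\lambda\partial_{xx}w+\mu\partial_x\bigl[(1-\rho_1-\rho_2)w\bigr]$, and testing against $w$ gives $\frac{d}{dt}\|w\|_{L^2}^2\le C_\lambda\|w\|_{L^2}^2$, whence $w\equiv0$ by Gr\"onwall, the manipulation being legitimate since parabolic regularity makes weak solutions smooth for $\lambda>0$. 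Finally, when $\partial_{xx}\rho(\cdot,0)$ exists, the Cole--Hopf transformation turns $\partial_t\rho=\lambda\partial_{xx}\rho+\mu(1-2\rho)\partial_x\rho$ into the heat equation, yielding a global classical solution with the prescribed datum; being also a weak solution it coincides with $\rho$, which therefore solves the stated Burgers' equation in the strong sense.
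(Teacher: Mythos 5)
Your proposal is correct in its overall architecture and coincides with the paper on two of its three blocks: the martingale decomposition of $Z\n_t$ via Dynkin's formula (the paper's $U\n_t$ and $V\n_t$ of (\ref{eq:martin1})--(\ref{eq:martin2})), the expansion of $\Omega\n Z\n_s/Z\n_s$ under the scaling (\ref{eq:scale2}) with the discrete summation by parts producing the Laplacian (the paper's Lemma \ref{lem:esti}), the $\Oc(1/N)$ bound on the quadratic variation killing the martingale, and tightness via Aldous' criterion on the projected real-valued processes. Where you genuinely diverge is at what the paper calls the ``Gordian knot'': the closure of the nonlinear term $\frac{1}{N}\sum_i \psi'(\tfrac iN,s)A_iA_{i+1}$. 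You propose the classical one-block/two-block replacement estimate, exploiting local equilibrium of the dominant symmetric part of the dynamics; this is the standard Kipnis--Landim route for the weakly asymmetric exclusion process, it is known to work on the torus with product Bernoulli invariant measures, and you correctly identify it as the main technical burden. The paper deliberately avoids this machinery: it treats the $2N$ values $\phi_a(\tfrac iN,t),\phi_b(\tfrac iN,t)$ as free coordinates, views the generator as a second-order partial differential operator $\Ac\n_t$ in these variables, convolves the equation with a test function $\chi_\varepsilon\n$, passes the derivatives onto $\chi_\varepsilon\n$ via the Lagrange adjoint, and then lets $N\to\infty$ using functional integration and the variational identities (\ref{eq:dY}), in which $\partial^2 Y_t/\partial\phi^2=\rho^2 Y_t$ directly supplies the $\rho^2$ factor without any block averaging. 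Each approach buys something: yours rests on established (if heavy) probabilistic estimates and is fully rigorous given the classical references, whereas the paper's bypasses entropy/Dirichlet-form arguments entirely but leans on a ``generalized measure'' $\delta(\phi)$ on $\CT$ whose construction the authors explicitly defer. Note also that you supply an actual uniqueness argument (energy estimate plus parabolic regularity, then Cole--Hopf for the strong form), where the paper only cites the literature; that part of your proposal is more complete than the original.
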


\begin{proof} \mbox{ } The proof  is contained in the next three subsections.

\subsection{Existence of limit points: sequential compactness} \label{TIGHT}
As usual in problems dealing with convergence of sequences of probability measures, our very starting point will be to establish the weak relative compactness of the set $\{\log Z\n_t, N\ge1\}$. Some of the probabilistic arguments employed in this paragraph are classical and can be found in the literature, e.g. \cite{Sp, KiLa}, although for slightly different or simpler models.

Letting  $\phi_a, \phi_b$ be two arbitrary functions in $\CT$, we refer to equation (\ref{eq:emp2}).

Using the exponential form of $Z\n_t$ and Lemma [A1-5.1] in \cite{KiLa} (see also  chapter 3 in \cite{EtKu} for related calculus), one can easily check that the two following random processes
\begin{eqnarray}
U_t\n & \egaldef &  Z\n_t - Z\n_0 - \int_0^t \bigl(\Omega\n [Z\n_s] + \theta_s\n Z\n_s\bigr) ds , 
\label{eq:martin1} \\
V_t\n &\egaldef & (U\n_t)^2  - \int_0^t \Bigl(\Omega\n  [(Z\n_s)^2]
- 2 Z\n_s \Omega\n  [Z\n_s] \Bigr)ds \label{eq:martin2}
\end{eqnarray}
are bounded $\{\mathcal{F}_t\n\}$-martingales, where 
\begin{equation}\label{eq:partial_t}
\theta_t\n \egaldef \frac{1}{N}\sum_{i\in\Gb\n} \Bigl[ \frac{\partial\phi_a}{\partial t}\Bigl(\frac{i}{N},t\Bigr)A_i\n(t)+ \frac{\partial\phi_b}{\partial t}\Bigl(\frac{i}{N},t\Bigr)B_i\n(t)\Bigr].
\end{equation}

Setting now
\begin{eqnarray*}\label{eq:delta}
\psi_{xy} & \egaldef & \phi_x-\phi_y=-\psi_{yx} , \\[0.2cm]
\Delta\psi_{xy} \Bigl(\frac{i}{N},t\Bigr) &\egaldef & \psi_{xy}\Bigl(\frac{i+1}{N},t\Bigr) -\psi_{xy}\Bigl(\frac{i}{N},t\Bigr),  \\[0.2cm]
 \widetilde{\lambda}\n_{xy}(i,t)& \egaldef & \lambda_{xy}(N)\left[
 \exp\biggl(\frac{1}{N} \Delta\psi_{xy} \Bigl(\frac{i}{N},t\Bigr)\biggr) - 1\right], 
\quad \ xy = ab \ \text{or} \ ba , 
\end{eqnarray*}
we have
\begin{equation}\label{eq:gen1}
 \Omega\n [Z\n_t ]  =    L\n_t Z\n_t ,
 \end{equation}
 where
\begin{equation}\label{eq:gen2}
L\n_t  \egaldef  \sum_{i\in\Gb\n} \widetilde{\lambda}\n_{ab}(i,t) A_{i}B_{i+1} +  
 \widetilde{\lambda}\n_{ba}(i,t) B_{i}A_{i+1}.
\end{equation}
  
On the other hand, a straightforward calculation in equation (\ref{eq:gen2}) allows to rewrite  (\ref{eq:martin2}) in  the form
\begin{equation}\label{eq:martin3}
V_t\n = (U\n_t)^2  - \int_0^t (Z\n_s)^2R_s\n ds ,
\end{equation}
where the process $R_t\n$ is stricly positive and given by
\[
R_t\n =  \sum_{i\in\Gb\n} \frac{[\widetilde{\lambda}\n_{ab}(i,t)]^2}{\lambda_{ab}(N)}
A_{i}B_{i+1} \,+\, \frac{[\widetilde{\lambda}\n_{ba}(i,t)]^2}{\lambda_{ba}(N)} B_{i}A_{i+1}.
\]  
 The integral term in  (\ref{eq:martin3}) is nothing else but the increasing process associated with Doob's decomposition of the submartingale  $(U\n_t)^2$. 

The folllowing estimates are crucial.
\begin{lem} \label{lem:esti}
\begin{eqnarray}
L\n_t  & = & \Oc(1) ,  \label{eq:lem1}  \\[0.2cm]
R_t\n & = & \Oc\bigl(\frac{1}{N}\bigr). \label{eq:lem2}
\end{eqnarray}
\end{lem}
\begin{proof} We will derive (\ref{eq:lem1}) by estimating  the right-hand side member of equation (\ref{eq:gen2}). From now on, for the sake of shortness, the first and second partial derivatives of $\psi (z,t)$ with respect to $z$ will be denoted respectively by $\psi '(z,t)$ and $\psi''(z,t)$.

Clearly,  $\Delta\psi_{xy} \bigl(\frac{i}{N},t\bigr) = \frac{1}{N}\psi_{xy} '\bigl(\frac{i}{N},t\bigr) + \mathcal{O}\bigl( \frac{1}{N^2}\bigr)$. Then, taking a  second order expansion of the exponential function and using equations (\ref{eq:scale1}) and (\ref{eq:scale2}), we can rewrite  (\ref{eq:gen2}) as
\begin{align}\label{eq:estim1}
 L\n_t & =  \frac{\mu(N)}{N} \sum_{i\in\Gb\n}\left[  \frac{A_{i}+A_{i+1}}{2} - A_{i}A_{i+1} \right] \Delta\psi_{ab} \Bigl(\frac{i}{N},t\Bigr) \nonumber \\
&+  \frac{\lambda(N)}{N}\sum_{i\in\Gb\n} (A_{i}-A_{i+1})\Delta\psi_{ab} \Bigl(\frac{i}{N},t\Bigr)
 +  \Oc \Bigl(\frac{1}{N}\Bigr).
\end{align}

 The first  sum on the right in (\ref{eq:estim1}) is uniformly bounded by a  constant depending on 
 $\psi$. Indeed $|A_{i}| \le 1$, and $\psi'$ is of bounded variation since $\psi\in\CT]$.
  As for the second sum coming in  (\ref{eq:estim1}), we have
  \[ 
  \! \sum_{i\in\Gb\n} \! (A_{i}-A_{i+1})\Delta\psi_{ab} \Bigl(\frac{i}{N},t\Bigr) = \!\!
   \sum_{i\in\Gb\n} \! A_{i+1} \left[\Delta\psi_{ab} \Bigl(\frac{i+1}{N},t\Bigr) -
   \Delta\psi_{ab} \Bigl(\frac{i}{N},t\Bigr)\right] \! .
   \]
Then the discrete Laplacian 
\[\Delta\psi_{ab} \Bigl(\frac{i+1}{N},t\Bigr) - \Delta\psi_{ab} \Bigl(\frac{i}{N},t\Bigr) \equiv 
\psi_{ab}\Bigl(\frac{i+2}{N},t\Bigr) -2\psi_{ab} \Bigl(\frac{i+1}{N},t\Bigr) + \psi_{ab} \Bigl(\frac{i}{N},t\Bigr)
\]
admits of the simple expansion
\begin{equation}\label{eq:psi}
\Delta\psi_{ab} \Bigl(\frac{i+1}{N},t\Bigr) - \Delta\psi_{ab} \Bigl(\frac{i}{N},t\Bigr) =
\frac{1}{N^2} \psi_{ab}''\Bigl(\frac{i}{N},t\Bigr) +\Oc\Bigl(\frac{1}{N^2}\Bigr).
\end{equation}

By (\ref{eq:scale2}),  $\DD\lambda(N)=\lambda N^2 +o(N^2)$,  
so that (\ref{eq:psi}) implies
 \begin{eqnarray}
 \frac{\lambda(N)}{N}\sum_{i\in\Gb\n} (A_{i}-A_{i+1})\Delta\psi_{ab} \Bigl(\frac{i}{N},t\Bigr)
&=& \sum_{i\in\Gb\n} \frac{\lambda A_{i+1}}{N} \psi_{ab}''\Bigl(\frac{i}{N},t\Bigr) + 
o\Bigl(\frac{1}{N}\Bigr) \nonumber \\[0.2cm]
& = &\Oc(1), \label{eq:estim2}
\end{eqnarray}
which concludes the proof of (\ref{eq:lem1}). The computation of $R_t\n$ leading to (\ref{eq:lem2}) can be obtained via similar arguments, remarking that
\[
R_t\n[\phi_a,\phi_b] = L_t\n[2\phi_a,2\phi_b] - 2L_t\n[\phi_a,\phi_b].  
\]
\end{proof}
To show the relative compactness of the family $Z\n$, which from the separability and the completeness of the underlying spaces is here equivalent to tightness, we proceed as in \cite{KiLa} by means of the following useful criterion.

\begin{prop} [Aldous's tightness criterion, see \cite{Bil}] \label{prop:tight}
A sequence $\{X\n\}$ of random elements of  $\Dc_\Rc[0,T]$ is tight (i.e. the distributions of the  $\{X\n\}$ are tight) if the two following conditions hold:

\begin{itemize} 
\item[(i) ]
\begin{equation} \label{AT1}
\lim_{a\to\infty} \limsup_N P [ || X\n ||  \ge a] =0, 
\end{equation}
where $\DD || X\n || \egaldef \sup_{t\le T} |X\n_t|$.
\item[(ii)] For each $\epsilon, \eta$, there positive numbers $\delta_0$ and $N_0$, such that, if 
$\delta\le\delta_0$ and $N\ge N_0$, and if $\tau$ is an arbitrary stopping time with
$\tau+\delta\le T$, then
\begin{equation} \label{AT2}
    P\bigl[ | X\n_{\tau+\delta} - X\n_{\tau} | \ge \epsilon \bigr] \le \eta .
\end{equation}
\end{itemize}
Note that condition (\ref{AT1}) is always necessary for tightness. \bbox
\end{prop}

We shall now apply Lemma \ref{lem:esti} to equations (\ref{eq:martin1}) and  (\ref{eq:martin3}), the role of $X_t\n$ in Proposition \ref{prop:tight} being played by 
$Z_t\n$. 

The random variables $Z\n_t$ and $\theta_t\n$ are clearly uniformly bounded, so that condition (\ref{AT1}) is immediate. To check condition (\ref{AT2}), rewrite (\ref{eq:martin1}) as
\begin{equation} \label{eq:martin4}
Z\n_{t+\delta} - Z\n_t = U\n_{t+\delta} - U\n_t + \int_{t}^{t+\delta}\  (L\n_s +\theta_s\n) 
Z\n_s ds .
\end{equation}
For $t\in[0,T]$, the integral term in (\ref{eq:martin4}) is bounded in modulus by $K\delta$ 
($K$~being a constant uniformly bounded in $N$ and $\psi$), and it satisfies (\ref{AT2}),
whenever  $t$ is replaced by an arbitrary stopping time. We are left with the analysis of $U\n_t$. But, from  (\ref{eq:martin3}), (\ref{eq:lem2}) and Doob's inequality for sub-martingales, we have
\begin{eqnarray}
\EE\bigl[(U\n_{t+\delta} - U\n_t )^2\bigr] &=& \EE\left[\int_t^{t+\delta} (Z\n_s)^2R_s\n ds\right]  \le  \frac{C\delta}{N}  ,  \nonumber \\[0.2cm]
P\left[\sup_{t\le T}  | U\n_t | \ge \epsilon\right] &\le& \frac{4}{\epsilon^2} 
\EE \left[\int_0^T (Z\n_s)^2R_s\n ds\right] \le \frac{4CT}{N\epsilon^2}, \label{eq:Doob}
\end{eqnarray}
where $C$ is a positive constant depending only on $\psi$. Thus $U\n_t\to 0$ in probability, as $N\to\infty$. This last property, together with (\ref{eq:martin1}), (\ref{eq:martin4}) and assumption (\ref{eq:init}), yield (\ref{AT2}) and the announced (weak) relative compactness of the sequence $Z\n_t$. Hence, the sequence of probability measures $Q\n$, defined on $\Dc_\mathcal{M}[0,T]$ and corresponding to the process $\mu\n_t$, is also relatively compact: this is a consequence of  classical projection theorems (see for instance Theorem 16.27 in \cite{Ka}). We are now in a position to establish a  further important property.

Let  $Q$ the  limit point of some  arbitrary subsequence $Q^{(N_k)}$, as $N_k\to\infty$, and  $Z_t\egaldef \lim_{{N_k}\to\infty}Z^{(N_k)}_t$. Then  the support of $Q$ is a set of sample paths absolutely continuous with respect to the Lebesgue measure. Indeed, the application $\mu_t\to \sup_{t\le T}\log Z_t$ is continuous and we have the immediate bound
\[
\sup_{t\le T}\log Z_t \le \int_0^1 [ |\phi_a(x,t) | +  |\phi_b(x,t)|]dx,
\]
which holds for all $\psi_{a}, \psi_{b} \in \mathbf{C}^2[0,1]$. Hence, by weak convergence, any limit point $Z_t$ has the form
\begin{equation}\label{eq:lim}
Z_t[\phi_a,\phi_b] = \exp\Bigl[\int_0^1 [\rho(x,t)\phi_a(x,t)+(1-\rho(x,t)\phi_b(x,t) ] dx\Bigr],
\end{equation}
where  $\rho(x,t)$  denotes the limit density [which a priori is a random quantity] of the sequence of empirical measures  $\mu_t^{(N_k)}$ introduced in  (\ref{eq:emp1}).

\subsection{A functional integral operator to characterize limit points} 
This is somehow the Gordian knot of the problem. Relying on the above weak compactness property, our next result shows that any arbitrary limit point $Q$ is concentrated on a set of trajectories which are weak solutions of an \emph{integral equation}. 

The main idea is to consider for a while the $2N$ quantities $\phi_a\bigl(\frac{i}{N},t\bigr), \phi_b\bigl(\frac{i}{N},t\bigr)$, $1\leq i \leq N$, as \emph{ordinary free variables}, which for the sake of shortness will be denoted respectively by $x\n_i$ and $y\n_i$. With this approach,  the problem of the hydrodynamic limit will appear to be mostly of an analytical nature.

Let
\begin{eqnarray*}
\alpha_{xy}\n(i,t) &= &\lambda_{ab}(N)\left[
 \exp\Bigl(\frac{ x\n_{i+1}  - x\n_i  + y\n_i  - y\n_{i+1}}{N} \Bigr) - 1\right], \\
 \alpha_{yx}\n(i,t) & = & \lambda_{ba}(N)\left[
 \exp\Bigl(\frac{ y\n_{i+1}  -y\n_i  + x\n_i  - x\n_{i+1}}{N} \Bigr) - 1\right].
 \end{eqnarray*}
Then, using  (\ref{eq:martin1}), (\ref{eq:partial_t}), (\ref{eq:gen1}), (\ref{eq:gen2}) and the definition of $Z\n_t$, we obtain immediately  the following   \emph{functional partial differential equation} (FPDE)
\begin{equation} \label{eq:deriv2}
  \frac{d(Z\n_t-U\n_t)}{dt}   = \Lc\n_t [Z\n_t] + \theta_t\n Z\n_t, 
\end{equation}
 where $\Lc\n_t$ is the operator
  \[
 \Lc\n_t [h] \egaldef  N^2\sum_{i\in\Gb\n}
\alpha_{xy}\n(i,t) \frac{\partial^2 h}{\partial x\n_i \partial y\n_{i+1}}
+ \alpha_{yx}\n(i,t)\frac{\partial^2 h}{\partial y\n_i \partial x\n_{i+1}}.
\]
More precisely,  introducing  the family of cylinder sets
\begin{equation}\label{eq:cylinder}
\Vc^{(p)} \egaldef [-|\Phi|,|\Phi|]^p, \quad p=1,2\ldots,
\end{equation}
with
\begin{equation}\label{eq:norm}
|\Phi| \egaldef \sup_{(z,t)\in[0,1]\times[0,T]} \bigl(|\phi_a(z,t)|, |\phi_b(z,t)|\bigr),
\end{equation}
we see immediately  that $\Lc\n_t$ acts on a subspace of $\Cc^\infty_0(\Vc\nd)$, since $Z\n_t$ is \emph{analytic} with respect to the coordinates 
$\{\phi_a(.,t),\phi_b(.,t)\}$, for each finite $N$ (things will be made more precise in section  \ref{OPERA}). Needless to say that  $\Lc\n_t$ is not of parabolic type, as the quadratic form associated with the second order derivative terms is clearly non definite (see e.g. \cite{EgSh}). In addition, equation (\ref{eq:deriv2})  is a well defined stochastic FPDE, as all underlying probability spaces emanate from families of interacting Poisson processes.  

Now, it might be worth accounting for the use of the word \emph{functional} above, and for the reason of isolating the third term on the right in (\ref{eq:deriv2}). Indeed, by (\ref{eq:partial_t}), $\theta_t\n$ is a functional involving also a \emph{partial} derivative 
of $Z\n_t$ with respect to $t$, as we can write
 \begin{equation} \label{eq:derivpart}
  \theta_t\n Z\n_t = \frac{1}{N}\sum_{i\in\Gb\n} \Bigl[  \frac{\partial Z\n_t}{\partial x\n_i} \frac{\partial  x\n_i}{\partial t} + \frac{\partial Z\n_t}{\partial y\n_i }\frac{\partial  y\n_i}{\partial t}\Bigr] =  \frac{\partial Z\n_t}{\partial t}.
 \end{equation}
The last equality in (\ref{eq:derivpart}) might look somewhat formal, but will come out more clearly in Section \ref{OPERA}. Note also, by the same argument which led to (\ref{eq:lim}), that we have 
\begin{equation}\label{eq:lim_t}
\lim_{N_k\to\infty} \theta_t^{(N_k)} = \int_0^1 \bigl[\rho(x,t)\frac{\partial\phi_a}{\partial t}(x,t)+(1-\rho(x,t) \frac{\partial\phi_b}{\partial t}(x,t) \bigr] dx .
\end{equation}

Our essential agendum is to prove  that any limit point of  the sequence of random measures $\mu_t \stackrel{weak}{=}\DD\lim_{N_k\to\infty}\mu_t^{(N_k)}$ satisfies an integral equation  corresponding to a \emph{weak solution} (or \emph{distributional} in Schwartz sense) of a Cauchy type operator.
To overcome the chief difficulty, namely the behaviour of the limit sum in (\ref{eq:deriv2}), we propose a seemingly new approach, which is tantamount to analyzing the family of second order linear partial differential operators $\Lc\n_t$ along the sequence $N_k\to\infty$. 

As briefly emphasized in the remark at the end of this section,  a brute force analysis of 
(\ref{eq:deriv2})  would lead to a dead end. Indeed an important preliminary step consists in extracting the juice of  the estimates obtained in Lemma \ref{lem:esti}, to rewrite the operator $\Lc\n_t$ in terms of only $N$ principal variables, up to quantities of order
$\Oc\Bigl(\frac{1}{N}\Bigr)$. This is summarized in the next lemma.

\begin{lem}\label{lem:esti2}
The following FPDE  holds.
\begin{equation} \label{eq:deriv4}
 \frac{d(Z\n_t-U\n_t)}{dt}  \egaldef \Ac_t\n[Z\n_t] + \theta_t\n Z\n_t + \Oc\Bigl(\frac{1}{N}\Bigr),
\end{equation}
where $\Ac_t\n$ is viewed as an operator with domain  $\Cc^\infty_{0}(\Vc^{\n})$ such that
\begin{equation} \label{eq:deriv5}
\begin{split}
\Ac_t\n[g] & \egaldef \sum_{i\in\Gb\n} \mu \psi_{ab}' \Bigl(\frac{i}{N},t\Bigr) \left[\frac{1}{2}\biggl( \frac{\partial g}{\partial x\n_i} + \frac{\partial g}{\partial x\n_{i+1}}\biggr) -N \frac{\partial^2 g}{\partial x\n_i \partial x\n_{i+1}} \right] \\[0.2cm]
& +\lambda \sum_{i\in\Gb\n} \psi_{ab}'' \Bigl(\frac{i}{N},t\Bigr) 
\frac{\partial g}{\partial x\n_{i+1}},
\end{split}
\end{equation}
the term $\Oc\bigl(\frac{1}{N}\bigr)$ being in modulus uniformly bounded by 
 $\frac{C}{N}$, where $C$ denotes a constant depending only on the quantity
 \[
 \sup_{(x,t)\in[0,1]\times [0,T]} \{\psi(x,t), \psi'(x,t), \psi''(x,t)\}\, .
 \]
 \end{lem} 
\begin{proof}
The result follows by elementary algebraic manipulations  from equations (\ref{eq:scale2}), (\ref{eq:martin1}), (\ref{eq:gen1}), (\ref{eq:estim1}), (\ref{eq:estim2}), and details will be  omitted.
\end{proof}
Taking Lemma \ref{lem:esti2} as a starting point, we sketch out below in Sections \ref{C1} and \ref{C2} the main lines of our analytical approach, which indeed can be briefly summarized by means of some  keywords.
\begin{itemize}
\item \emph{Coupling},  taken here in Skohorod's context.
\item \emph{Regularization} and  \emph{functional integration}.  Regularization refers to the fundamental method used in the theory of distributions to approximate either functions or distributions. We shall apply it to the FPDE (\ref{eq:deriv4}), considering  $t$ and the $N$ values \emph{of the function $\phi_a(.,t)$ (taken at points of the torus)} as $N+1$ \emph{ordinary variables}. Then, passing to the limit as $N\to\infty$, we  introduce convenient functional integrals together with \emph{variational derivatives}. This might likely extend to much wider systems, although this assertion could certainly be debated.
\end{itemize}
\subsubsection{Interim reduction to an almost sure convergence setting}\label{C1} This can be achieved by means of the extended Skohorod coupling (or transfer) theorem (see Corollary 6.12  in \cite{Ka}), which in brief says that, if a sequence of real random variables $(\xi_k)$ is such that $\lim_{k\to\infty}f_k(\xi_k)= f(\xi)$  converges in distribution, then there exist a probability space  $\mathcal{V}$ and  a new random sequence $\widetilde{\xi_k}$, such that  $\widetilde{\xi_k}\stackrel{\Lc}{=}\xi_k$ and $\lim_{k\to\infty} f_k(\widetilde{\xi_k})= f(\xi)$, almost surely in $\mathcal{V}$, with $\widetilde{\xi}\stackrel{\Lc}{=}\xi$. Here this theorem will be applied to the family   $Z^{(N_k)}_t$,  which thus gives rise to a new sequence, named $Y^{(N_k)}_t$  in the sequel. Clearly this step is in no way obligatory, but rather a matter of taste. Indeed, one could still keep on working in a weak convergence context, with Alexandrov's portmanteau theorem (see e.g. \cite{EtKu}) whenever needed. $\bbox$

\subsubsection{Considering (\ref{eq:deriv5}) as a partial differential operator with constant coefficients} \label{C2} 
For each finite $N$, we consider the quantities 
\[
 \psi_{ab}' \Bigl(\frac{i}{N},t\Bigr), \psi_{ab}'' \Bigl(\frac{i}{N},t\Bigr), \ i = 1,\dots,N,
\]
as \emph{constant parameters, while the  $x\n_i$'s will be taken as free variables from a variational calculus point of view}. This  is clearly feasible, remembering that, by definition, 
$\psi_{ab} = \phi_a-\phi_b$, for all $\phi_a, \phi_b \in \CT$.  Hereafter, $t$ will be viewed as an exogeneous mute variable, not participating concretely in the proposed variational approach.

Then, according to  the notation introduced in section \ref{C1},  we can rewrite  (\ref{eq:deriv4}) in the form 
\begin{equation}\label{eq:esti3}
 \frac{d(Y\n_t - U\n_t)}{dt} \egaldef \Ac_t\n[Y\n_t] + \theta_t\n Y\n_t + \Oc\Bigl(\frac{1}{N}\Bigr),
\end{equation}
where $\theta_t\n$ is still given by (\ref{eq:partial_t}), keeping in mind that all random variables in (\ref{eq:esti3}) are defined with respect to this new (though unspecified) probability space introduced in section \ref{C1} above. In particular, from the tightness proved in section \ref{TIGHT}, letting  $N\to\infty$ along some subsequence $N_k$, we have
\[ 
\lim_{k\to\infty} Y^{(N_k)} _t[\phi_a,\phi_b] \stackrel{a.s.}{\to}Y _t[\phi_a,\phi_b].
\]

\subsubsection{Analysis of the FDPE \protect (\ref{eq:esti3})} \label{OPERA} 
The idea now is to propose a \emph{regularization} procedure, which consists in carrying out the convolution of  (\ref{eq:esti3}) with a suitably chosen test function, noting that $Y\n _t$ nowhere vanishes and is uniformly bounded. 

As usual,  the convolution $f \star g$ of two integrable functions $f,g \in \Cc^\infty_0(\Vc\n)$, with $\Vc\n$ given in  (\ref{eq:cylinder}), will be defined by
 \[
 (f \star g)(u) = \int_{\Vc\n} f(u-v)g(v)dv .
 \]
Let $\omega$ be the function of the real variable $z$ defined by
\[
\omega(z)\egaldef
\begin{cases}
\exp\bigl( \frac{1}{z}\bigr)  \ \mathrm{if} \  z< 0, \\[0.2cm]
0 \ \mathrm{if} \  z \geq 0,
\end{cases}
\]
where it will be convenient to write $\omega'(z) \egaldef \frac{d\omega(z)}{dz}$ and 
$\omega''(z) \egaldef \frac{d^2\omega(z)}{dz^2}$.

Setting $\vec{x}\n \egaldef (x\n_1,x\n_2,\ldots,x\n_N)$, with $x\n_i=\phi_a\bigl(\frac{i}{N},t\bigr), 1\leq i\leq N$, we introduce the following  family of positive test functions 
 $\chi_{\varepsilon}\n\in\Cc^\infty_0(\Vc\n)$, 
\begin{equation}\label{eq:test}
\chi_{\varepsilon}\n(\vec{x}\n) = \omega\biggl(\frac{1}{N}\sum_{i=1}^N (x\n_i)^2 -\varepsilon^2\biggr), \quad \varepsilon\geq 0.
\end{equation}
\paragraph{Note:} \emph{It is worth keeping in mind that, as often as possible, the time variable $t$ will be omitted in most of the mathematical quantities,  e.g. $\vec{x}\n(t)$. Indeed, as mentioned before, $t$ plays in some sense the role of a \emph{parameter.}}

From (\ref{eq:esti3}), it follows immediately that
\begin{equation}\label{eq:conv1}
\begin{split}
\left(\frac{d(Y\n_t - U\n_t)}{dt} \star \chi_{\varepsilon}\n\right)(\vec{x}\n) =  & \left(\Ac_t\n[Y\n_t] \star \chi_{\varepsilon}\n\right)(\vec{x}\n) \\
 + & \, \bigl((\theta_t\n Y\n_t) \star \chi_{\varepsilon}\n\bigr)(\vec{x}\n) + \Oc \Bigl(\frac{1}{N}\Bigr).
\end{split}
\end{equation}
Now, by (\ref{eq:deriv5}) and according to the statement made in Section \ref{C2}, the first term in the right-hand side member of (\ref{eq:conv1}) can be integrated by parts, so that, for any finite $N$ and $\varepsilon$ sufficiently small,
\begin{equation}\label{eq:adj1}
 \left(\Ac_t\n[Y\n_t] \star \chi_{\varepsilon}\n\right)(\vec{x}\n) = 
 \left(\widetilde{\Ac}_t\n[\chi_{\varepsilon}\n] \star Y\n_t\right)(\vec{x}\n) ,
  \end{equation}
where $\widetilde{\Ac}\n_t$  is by definition the \emph{adjoint} operator of  $\Ac\n$ in the Lagrange sense. Here the domain of  $\widetilde{\Ac}\n_t$ consists of all functions  $h\n$ of the form
\[
h\n = \exp \left[\int_0^1 d\sigma_a\n(x)V[\phi_a(x,t)] +  d\sigma_b\n(x)V[\phi_b(x,t)]\right], 
\]
where 
\begin{itemize}
\item $V:  \CT\rightarrow\Rc^+$ stands for an arbitrary analytic function; 
\item  $\sigma_a\n$ and $\sigma_b\n$ are arbitrary \emph{discrete} probability measures on $\Gb\n$.
\end{itemize}
Clearly $\chi_{\varepsilon}\n$ belongs to the domain of $\widetilde{\Ac}\n_t$. Under the assumptions made in Section \ref{C2},  a direct integration by parts  in  (\ref{eq:deriv5})  yields the formula
\begin{equation} \label{eq:adj2}
\begin{split}
\widetilde{\Ac}\n_t[h]  & = - \sum_{i\in\Gb\n} \mu \psi_{ab}' 
\Bigl(\frac{i}{N},t\Bigr) \left[ \frac{1}{2}\biggl( \frac{\partial h}{\partial x\n_i} + 
\frac{\partial h}{\partial x\n_{i+1}}\biggr) + N \frac{\partial^2 h}{\partial x\n_i \partial x\n_{i+1}}\right]  \\[0.2cm] 
& - \lambda \sum_{i\in\Gb\n} \psi_{ab}'' \Bigl(\frac{i}{N},t\Bigr) 
\frac{\partial h}{\partial x\n_{i+1}},
\end{split}
\end{equation}
where $\widetilde{\Ac}\n_t$ has been defined in (\ref{eq:adj1}).

Let, for each $\phi\equiv\phi_a\in\CT$,
\begin{equation}\label{eq:chi-lim}
\chi_{\varepsilon}(\phi) \egaldef \lim_{N\to\infty} \chi_{\varepsilon}\n(\vec{x}\n) = 
\omega\left(\int_0^1 \phi^2(x,t)dx -\varepsilon^2 \right), 
\end{equation}
where the integral in (\ref{eq:chi-lim}) is readily obtained as the limit of 
the Riemann sum in (\ref{eq:test}).

 \begin{lem}\label{OP1}
For each $\phi(x,t)\in\CT$, the following limit holds uniformly.
\begin{equation}\label{eq:A-lim}
\lim_{N\to\infty} \! \widetilde{\Ac}\n_t[\chi_{\varepsilon}\n] (\vec{x}\n) = - \!\!\int_0^1 \!\bigl[ \mu\psi_{ab}' (x,t) K(\phi,x,t) + \lambda\psi_{ab}'' (x,t)H(\phi,x,t)\bigr] dx,
\end{equation}
\end{lem}
with
\begin{equation}\label{DELTA}
\begin{cases}
\DD H(\phi,z,t) =2\phi (z,t) \omega' \bigg(\int_0^1 \phi^2(u,t)du -\varepsilon^2\biggr),
\\[0.3cm]
\DD K(\phi,z,t)= H(\phi,z,t) +  4\phi^2 (z,t)\omega'' \bigg(\int_0^1 \phi^2(u,t)du -\varepsilon^2\biggr).
\end{cases}
\end{equation}
 \begin{proof}
For fixed $N$, the function $\widetilde{\Ac}\n_t[\chi_{\varepsilon}\n]$ has partial derivatives of any order with respect to the coordinates  $x\n_i, i=1,\ldots,N$. In particular, we get 
from \eqref{eq:test}, for each $i\in\Gb\n$,
\begin{eqnarray*}
\frac{\partial\chi_{\varepsilon}\n}{\partial x\n_i} (\vec{x}\n)& = & 
2\frac{x\n_i}{N}\omega'\biggl(\frac{1}{N}\sum_{i=1}^N \bigl(x\n_i\bigr)^2 -\varepsilon^2 \biggr), \\[0.2cm]
N\frac{\partial^2\chi_{\varepsilon}\n}{\partial x\n_i\partial x\n_{i+1}}(\vec{x}\n) & = &
4\frac{x\n_ix\n_{i+1}}{N}\omega''\biggl(\frac{1}{N}\sum_{i=1}^N \bigl(x\n_i\bigr)^2 -\varepsilon^2 \biggr) .
\end{eqnarray*}
So we are again left with Riemann sums, which, for any continuous function $k(x,t)$, yield at once
\[
\begin{split}
& \lim_{N\to\infty} \sum_{i\in\Gb\n} k \Bigl(\frac{i}{N},t\Bigr) \frac{\partial\chi_{\varepsilon}\n}{\partial x\n_i} (\vec{x}\n) = \\
&2\omega' \bigg(\int_0^1 \phi^2(x,t)dx -\varepsilon^2\biggr) \int_0^1 k(x,t)\phi (x,t)dx ,
 \end{split}
\]
and
\[
\begin{split}
& \lim_{N\to\infty} N \sum_{i\in\Gb\n} k 
 \Bigl(\frac{i}{N},t\Bigr) \frac{\partial^2\chi_{\varepsilon}\n}{\partial x\n_i\partial x\n_{i+1}} (\vec{x}\n)  =   \\ 
& 4\omega'' \bigg(\int_0^1 \phi^2(x,t)dx -\varepsilon^2\biggr) \int_0^1 k(x,t)\phi^2 (x,t)dx . 
 \end{split}
\]
Keeping in mind that, for all $N$, the vector $\vec{x}\n$  (from its very definition) must be a discretization of some function element in $\CT$, the last important step to derive (\ref{eq:A-lim}) requires to give a precise meaning to the limit
\[ 
\lim_{N\to\infty}\int_{\Vc\n}\chi_{\varepsilon}\n(\vec{x}\n) d\vec{x}\n, 
\]
allowing to carry out  \emph{functional integration} and \emph{variational differentiation}. In this respect, let us emphasize (if necessary at all\,!) that the usual constructions of measures and integrals do not apply in general when the domain of integration is an infinite-dimensional space of functions or mappings, all the more because a complete axiomatic for functional integration does not really exist; indeed each case requires the construction of ad hoc \emph{generalized measures}, see promeasures in \cite{BOUR,CAR} or quasi-measures in \cite{MAI}. These questions prove to be of importance in various problems related to theoretical physics. In our case study, integration over paths belonging to $\in \CT$ needs to be properly constructed.  

The point is to define, as $N\to\infty$, a \emph{volume element}  denoted by
$\delta(\phi)$. We shall not do it here, but this could be achieved by mimicking classical fundamental approaches, see e.g.  \cite{BOUR,CAR,MAI}. The main tool is the important F.~Riesz' representation theorem, which  to every positive linear functional  let correspond a unique positive measure.
\end{proof}
Now from equation \eqref{eq:chi-lim} it is permissible to introduce the normalized test functional
 \[
 \overline{\chi}_{\varepsilon}(\phi) = \frac{\chi_{\varepsilon}(\phi)}{D},
\]
where $D$ is chosen to ensure
\[ 
\int_{\Cc^\infty_{0}([-|\Phi|,|\Phi|])}  \overline{\chi}_{\varepsilon}(\phi)\, \delta(\phi) = 1,
\]
and $\Phi$ is given by \eqref{eq:norm}, so that
\[
D= \int_{\Cc^\infty_{0}([-|\Phi|,|\Phi|])}  \omega\left(\int_0^1 \phi^2(x,t)dx -\varepsilon^2 \right) 
\,\delta(\phi).
\]
From  Skohorod's coupling theorem,  $Y_t$ does satisfy an equation of the form (\ref{eq:lim}). Hence, we can write  the following functional derivatives (which are plainly of a Radon-Nykodym nature)
\begin{equation} \label{eq:dY}
\begin{cases} 
\DD \frac{\partial Y_t}{\partial\phi} = \rho(.,t) Y_t  ,\\[0.3cm]
\DD \frac{\partial^2 Y_t}{\partial\phi^2} = \rho^2(.,t)Y_t .
\end{cases}
 \end{equation}

Now everything is in order to complete the puzzle, according to the following steps.
\begin{enumerate}
\item First, using  \eqref{eq:adj1}, rewrite \eqref{eq:conv1} as
\begin{equation} \label{eq:conv2}
\begin{split}
\left(\frac{d(Y\n_t - U\n_t)}{dt} \star \chi_{\varepsilon}\n\right)(\vec{x}\n) =  & 
 \left(\widetilde{\Ac}_t\n[\chi_{\varepsilon}\n] \star Y\n_t\right)(\vec{x}\n) \\
 + & \, \bigl((\theta_t\n Y\n_t) \star \chi_{\varepsilon}\n\bigr)(\vec{x}\n) 
 + \Oc \Bigl(\frac{1}{N}\Bigr).
\end{split}
\end{equation}
\item Let $N\to\infty$ in  \eqref{eq:conv2} and then replace
$\chi_{\varepsilon}(\phi)$ by $\overline{\chi}_{\varepsilon}(\phi)$, remembering that by \eqref{eq:Doob} $U\n_t =\Oc(1/N)\to0$ uniformly. 
\item Carry out two functional  integration by parts in equation \eqref{eq:A-lim} by making use of  
 \eqref{eq:dY}.
\item Finally, integrate on $[0,T]$, let $\varepsilon\to0$ and switch back to the original probability space, where $Z\n _t[\phi_a,\phi_b]$, by section \ref{C1}, converges in distribution to $Z_t[\phi_a,\phi_b]$: this yields exactly the announced Cauchy problem \eqref{eq:Cauchy}.
\end{enumerate}
Hence, the family of random measures $\mu\n_{t}$ converges in distribution to a deterministic measure $\mu_t$, which in this peculiar case implies also convergence in probability.
\subsection{Uniqueness} 
The problem of uniqueness  of weak solutions of the Cauchy problem (\ref{eq:Cauchy}) for nonlinear equation is in fact already solved in  the literature.  For a wide bibliography on the subject, we refer the reader for instance to \cite{EgSh}. 
The proof of Theorem \ref{theo:main} is concluded
\end{proof}

\section{Conjecture for the n-species model}

We  will state a conjecture about  hydrodynamic equations for the $n$-species model, briefly introduced in section  \ref{sec:clock}, in the so-called \emph{equidiffusive} case,
precisely described hereafter.
 
 \begin{defin}The $n$-species system  is said to be \emph{equidiffusive} whenever there exists a constant $\lambda$, such that, for all pairs $(k,l)$,
 \[
 \lim_{N\to\infty} \frac{\lambda^{kl}(N)}{N^2} = \lambda.
 \]
\end{defin}
Then, letting
\[
\alpha^{kl} \egaldef  \lim_{N\to\infty}N \log \left[\frac{\lambda^{kl}(N)}{\lambda^{lk}(N)}\right], 
\]
we assert the following hydrodynamic system holds.
\[
\frac{\partial\rho_k}{\partial t} = \lambda\left[\frac{\partial^2\rho_k}{\partial x^2}
+\frac{\partial}{\partial x}\Bigl(\sum_{l\ne k}\alpha^{lk}\rho_k\rho_l\Bigr)\right], \ k=1,\ldots,n.
\]
The idea is  to apply the functional approach  presented in this paper: this is the subject matter of an ongoing work.

\end{document}